%
%
%
%

\documentclass{amsart}

\usepackage{amsmath}
\usepackage[alphabetic]{amsrefs}
\usepackage{hyperref}
\usepackage{mathrsfs}
\usepackage{tikz-cd}

\newtheorem{theorem}{Theorem}[section]
\newtheorem{lemma}[theorem]{Lemma}
\newtheorem{proposition}[theorem]{Proposition}
\newtheorem{corollary}[theorem]{Corollary}

\theoremstyle{definition}
\newtheorem{definition}[theorem]{Definition}

\theoremstyle{remark}
\newtheorem{remark}[theorem]{Remark}

\numberwithin{equation}{section}

\def\Spec{\mathop{\mathrm{Spec}}}

\def\F{{\mathscr{F}}}
\def\Fu{\underline{\F}}
\def\Fun{\underline{\F}_{\leq{n}}}
\def\O{\mathscr{O}}
\def\WO{{W\Omega_X^N}}

\def\WOn{{W_n\Omega_X^N}}

\def\WOno{{W_{n+1}\Omega_X^N}}

\def\fa{{\text{~for any~}}}

\def\tensor{\mathop{\otimes}\limits}
\def\dsum{\mathop{\bigoplus}}
\def\sHom{\mathop{\mathcal{H}\! \mathit{om}}\nolimits}
\def\Hom{\mathop{\mathrm{Hom}}\nolimits}

\makeatletter
\newcommand{\proofstep}[1]{%
  \par
  \addvspace{\medskipamount}
  \textit{#1\@addpunct{:}}\enspace\ignorespaces
}
\makeatother

\begin{document}

\title{Duality for Witt--divisorial sheaves}


\author{Niklas Lemcke}
\subjclass[2010]{14F30, 14F17.}
\keywords{de Rham-Witt complex, Serre duality, Kodaira vanishing theorem, positive characteristic}

\address{Department of Mathematics, School of Science and Engineering, Waseda University,
Ohkubo 3-4-1, Shinjuku, Tokyo 169-8555, Japan}
\email{numberjedi@akane.waseda.jp}

\begin{abstract}
    We adapt ideas from Ekedahl~\cite{Eke} to prove a Serre-type duality for Witt-divisorial sheaves of $\mathbb Q$--Cartier divisors on a smooth projective variety over a perfect field of finite characteristic. We also explain its relationship to Tanaka's vanishing theorems~\cite{Tanaka}.
\end{abstract}

\maketitle

\tableofcontents

\section*{Introduction}
    Kodaira Vanishing and its generalizations have been crucial in the development of the Minimal Model Program (MMP) in characteristic zero. However, as is well-known, they do not hold in positive characteristic. Tanaka~\cite{Tanaka} proposed a Kodaira-like vanishing theorem which holds for ample divisors in positive characteristic. 

    \begin{theorem}[\cite{Tanaka}, cf. Theorem~\ref{thmTanaka}]\label{thm0}
        Let $k$ be a perfect field of characteristic $p>0$, and $X \xrightarrow \phi \text{Spec }k$ be an $N$--dimensional smooth projective variety.
        If $A$ is an ample $\mathbb Q$--Cartier divisor on $X$, then
        \renewcommand{\theenumi}{\roman{enumi}}
        \begin{enumerate}
            \item $H^j(X, W\O_X(-A)) = p^t\text{--torsion}, \text{ for some } t, \fa j < N$.
            \item 
                \begin{itemize}
                    \item $R^i\phi_*\sHom_{W\O_X}(W\O_X(-A),\WO)_\mathbb Q = 0$
                    \item $H^i(X, W\Omega_X^N \tensor_{W\O_X} W\O_X(A)) = 0 \fa i>0,$ if $A$ is Cartier.
                \end{itemize}
        \end{enumerate}
    \end{theorem}
    Interestingly, the proof of (i) is easier than the proof of (ii).
    Ideally we would want the theorem to hold for nef and big invertible sheaves, but this is not yet known.
    The purpose of this paper is to establish a duality property for the Witt--divisorial sheaf $W\O_X(D)$ associated to a $\mathbb Q$--Cartier divisor $D$ on $X$. 

    In~\cite{Eke}, Ekedahl introduces a duality functor $D$, and eventually constructs an isomorphism (\cite{Eke}*{Theorem III: 2.9})
    \[
         D(R\Gamma(W\Omega_X^\bullet))(-N)[-N] \cong R\Gamma(W\Omega_X^\bullet),
    \]
    where $(-N)$ and $[-N]$ denote shifts in module and complex degree, respectively.
    He then shows that
    \[
        D(R\Gamma(W\Omega_X^\bullet)) \cong R\Hom_R(R\Gamma(W\Omega_X^\bullet), \check{R}),
    \]
    in $D(R)$, where $R$ is the Raynaud Ring (a non-commutative $W$-algebra), and $\check R$ is a certain $R$--module. 
    Where Ekedahl uses the Raynaud ring $R$, we use the similar Cartier-Dieudonn\'e-ring $W[F,V] =: \omega $.
    \begin{theorem}[Cf. Theorem~\ref{thmDuality}]\label{thm1}
        Let $X$ be a smooth projective variety over a perfect field $k$ of characteristic $p > 0$, and $D$ be a $\mathbb Q$--Cartier divisor on $X$.
        Then
        \[
            \begin{aligned}
                \prod_{t\in\mathbb Z} R\phi_*R\lim_nR\sHom_{W_n\O_X}(W_n\O_X(p^tD),\WOn) \\
                \cong R\Hom_\omega\left(\dsum_{t\in\mathbb Z} R\phi_* W\O_X(p^tD), \check\omega[-N]\right),
            \end{aligned}
        \]
        for a certain left--$\omega$--module $\check\omega$.
    \end{theorem}
    This allows us to recover Tanaka's vanishing theorem, as well as to make the (possibly) non--vanishing torsion somewhat more explicit.
    \subsection*{Acknowledgements}
            I want to thank the referee for many insightful comments, in particular suggesting a much simpler proof for the vanishing of the higher derived limits in Proposition~\ref{propMiniDuality}.
            Further, I am particularly grateful to Professor Tanaka Hiromu for agreeing to meet me and answer my questions regarding his work, as well as my advisor Professor Kaji Hajime for his unwavering support and exacting attention to detail during our discussions.

\section{Notation}
    Fix the following notations and conventions:
    \begin{itemize}
        \item A variety over $k$ is a separated integral scheme of finite type over $k$.
        \item Throughout this paper we define $X \xrightarrow \phi S=\Spec k$, where $k$ is a perfect field of characteristic $p>0$, and $X$ is assumed to be a smooth projective variety.
        \item If $C$ is a complex, $C[i]$ denotes $C$ shifted by $i$ in complex degree.
        \item If $M_n$ is an inverse system, then $\lim_n M_n$ denotes the inverse limit.
        \item For a module $M$, we write $M_\mathbb Q := M \tensor_\mathbb Z \mathbb Q$.
    \end{itemize}

\section{Preliminaries}
    This section serves to recall some definitions and known results. 

\subsection{Tanaka's vanishing}
    The original Kodaira Vanishing is closely related to Hodge decomposition. 
	Hodge decomposition in turn resembles the slope decomposition of crystalline cohomology in terms of the de Rham-Witt complex. 
	This motivates the attempt at finding a useful vanishing theorem in the context of de Rham-Witt.
    \begin{definition}[Teichm\"uller lifts of line bundles, cf.~\cite{Tanaka}]
        For a ring $A$, any element $a\in A$ can be naturally identified with an element in $W(A)$ by
        \[
            A \rightarrow W(A)
        \]
        \[
            a \mapsto \underline a := (a,0,0,\cdots).
        \]
        This $\underline a$ is called the {\it Teichm\"uller representative} of $a$.
        For an invertible sheaf $\mathscr F$ on $X$ defined by local transition functions $(f_{ij})$, Tanaka defines the {\it Teich\-m\"uller lift} $\underline {\mathscr F}$ of an invertible $\O_X$--module to be the invertible $W\O_X$--module given by the Teichm\"uller representatives of the transition functions $(\underline {f_{ij}})$.
        The {\it truncated} Teichm\"uller lift is defined by
        \[
            \Fun := W_n\O_X \tensor_{W\O_X} \Fu.
        \]
    \end{definition}

    \begin{definition}[Witt-divisorial sheaves, cf.~\cite{Tanaka}]\label{defWOD}
        Alternatively, the {\it Witt-divisorial sheaf associated to} an $\mathbb R$--divisor $D$ is defined by
        \[
            \Gamma_V(W\O_X(D)) := \left\{ (\phi_0, \phi_1, \cdots) \in W(K(X)); \text{div}(\phi_n) + p^n D|_V \geq 0 \right\}.
        \]
        As Tanaka shows, for a Cartier divisor on a reasonably nice scheme, these two notions are equivalent, since $W\O(D)|_U = \underline{f} W\O_X|_U$ for $U$ affine open in $X$ and $f$ a local equation for $D$ on $U$ (cf.~\cite{Tanaka}*{Proposition 3.12}). 
        $W_n\O_X(D)$ is a coherent $W_n\O_X$--module (cf. \cite{Tanaka}*{Proposition 3.8}).
    \end{definition}

    The following two propositions due to Tanaka~\cite{Tanaka} will be used frequently throughout this paper, often without explicit reference.
    
    \begin{proposition}[Cf. \cite{Tanaka}*{Proposition 3.15}]
        Let $D$ be an $\mathbb R$--divisor on $X$. Then, for any $0 \leq e, 0<m\leq n$, there is an isomorphism
        \[
            R\sHom_{W_n\O_X}((F^e)_*W_m\O_X(D), \WOn)
            \cong (F^e)_*\sHom_{W_m\O_X}(W_m\O_X(D), W_m\Omega_X^N)
        \]
        in $D(W\O_X-\mathfrak{mod})$.
    \end{proposition}

    \begin{proposition}[Cf. \cite{Tanaka}*{Proposition 4.9 and Lemma 2.10}]
        Let $D$ be an $\mathbb R$--divisor on $X$. 
        Let $M$ be a coherent $W_n\O_X$--module such that the induced map $M(U) \rightarrow M_\xi$ is injective for any non--empty open subset $U \subset X$, where $M_\xi$ denotes the stalk of $M$ at the generic point $\xi$ of $X$.
        Then the induced $W\O_X$--module homomorphism
        \[
            \sHom_{W_n\O_X}(W_n\O_X(D),M) \xrightarrow \theta
            \sHom_{W\O_X}(W\O_X(D),M)
        \]
        is an isomorphism.

        $\WOn$ and $\text{gr}^n\WO$ are two such $W_n\O_X$--modules.
    \end{proposition}

    \begin{theorem}[Tanaka, {cf.\ \cite{Tanaka}*{Theorem 1.1}}]\label{thmTanaka}
        Let $k$ be a perfect field of characteristic $p>0$, and $X$ be an $N$--dimensional smooth projective variety over $k$.
        If $A$ is an ample $\mathbb Q$-Cartier divisor on $X$, then there exists $ s_0$ such that for all $s_0 < s$,
        \renewcommand{\theenumi}{\roman{enumi}}
        \begin{enumerate}
            \item 
                \begin{itemize}
                    \item $H^j(X, W_n\O_X(-sA)) = 0 \fa j < N, n \in \mathbb N$,
                    \item $H^j(X, W\O_X(-sA)) = 0 \fa j < N$,
                    \item $H^j(X, W\O_X(-A)) = p^t\text{--torsion, for some } t, \fa j < N$.
                \end{itemize}
            \item 
                \begin{itemize}
                    \item $R^i\phi_*\sHom_{W\O_X}(W\O_X(-A),\WO)_\mathbb Q = 0 \fa 0<i$,
                    \item $R^i\phi_*(W_n\O_X(sA)\tensor\WOn) = 0 \fa 0<i, n \in \mathbb N, A$ Cartier,
                    \item $R^i\phi_*(W\O_X(sA)\tensor\WO) = 0 \fa 0<i, A$ Cartier,
                    \item $R^i\phi_*(W\O_X(A)\tensor\WO ) = 0 \fa 0<i, A$ Cartier.
                \end{itemize}
        \end{enumerate}
    \end{theorem}
        
	\begin{remark}
        The theorem appears to suggest a Serre-type duality.
        This duality would be asymmetric in the sense that torsion from (i) does not appear in (ii).
		Note that the proof of (i) is simple relative to that of (ii).
		So ideally duality would recover (ii) from (i), potentially facilitating the proof of the theorem for nef and big $D$.
	\end{remark}

\section{Duality}
\subsection{Duality Theorem}
    \begin{proposition}\label{prop1}
        Let $\F$ be an invertible $\O_X$--module.
        For any $n>0$, 
        \begin{equation}\label{prop1RelEq}
            W_n\Omega_X^N\tensor_{W_n\O_X}\Fun \cong R\sHom_{W_n\O_X}(\Fun^{\vee}, \WOn)
        \end{equation}
        such that, in particular, 
        \begin{equation}\label{prop1AbsEq}
            {H^i(X,\WOn\tensor_{W_n\O_X}\Fun)} 
            \cong 
            \Hom_{W_n}(H^{N-i}(X,{\Fun^\vee}), W_n)
            \fa i\geq0, n>0.
        \end{equation}
    \end{proposition}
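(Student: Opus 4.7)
\proofstep{Proof plan}
My plan is to reduce the relative isomorphism~\eqref{prop1RelEq} to an untwisted duality for the truncated de Rham--Witt complex by exploiting the invertibility of $\Fun$, and then to deduce the absolute isomorphism~\eqref{prop1AbsEq} from Grothendieck--Serre duality on the proper $W_n$--scheme $(X,W_n\O_X)$, using the self--injectivity of $W_n$.

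For~\eqref{prop1RelEq}, since $\F$ is invertible and the Teichm\"uller construction preserves invertibility, $\Fun$ is locally free of rank one over $W_n\O_X$, so $\Fun^\vee$ is locally free and pulls out of the inner argument of $R\sHom_{W_n\O_X}$:
\[
    R\sHom_{W_n\O_X}(W_n\Omega_X^{N-i}\tensor_{W_n\O_X}\Fun^\vee,\WOn)
    \cong
    R\sHom_{W_n\O_X}(W_n\Omega_X^{N-i},\WOn)\tensor_{W_n\O_X}\Fun .
\]
It then suffices to prove the untwisted identity $W_n\Omega_X^i \cong R\sHom_{W_n\O_X}(W_n\Omega_X^{N-i},\WOn)$, with comparison map induced by the wedge pairing $W_n\Omega_X^i\tensor W_n\Omega_X^{N-i}\to\WOn$. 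This is local on $X$ and I would proceed by induction on $n$: the base case $n=1$ is classical sheaf--level Serre duality $\Omega_X^i\cong R\sHom_{\O_X}(\Omega_X^{N-i},\Omega_X^N)$ on the smooth variety $X$, which holds because $\Omega_X^{N-i}$ is locally free and hence has no higher $\sExt$; the inductive step uses the short exact sequences relating $W_n\Omega_X^\bullet$, $W_{n-1}\Omega_X^\bullet$ and $\Omega_X^\bullet$ via $R$, $V$ and the $\Fil$--filtration of~\cite{Illusie}, combined with the five lemma applied to the long exact sequences obtained by $R\sHom_{W_n\O_X}(-,\WOn)$.

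For~\eqref{prop1AbsEq}, specialize~\eqref{prop1RelEq} at $i=N$ to get
\[
    \WOn\tensor_{W_n\O_X}\Fun \cong R\sHom_{W_n\O_X}(\Fun^\vee,\WOn),
\]
apply $R\Gamma$, and invoke Grothendieck--Serre duality for the proper morphism $(X,W_n\O_X)\to\Spec W_n$ with relative dualizing complex $\WOn[N]$, obtaining
\[
    R\Gamma(X,\WOn\tensor_{W_n\O_X}\Fun) \cong R\Hom_{W_n}(R\Gamma(X,\Fun^\vee),W_n)[-N].
\]
The descent to individual cohomology groups is then immediate: $W_n$ is a zero--dimensional Gorenstein local ring, hence self--injective as a module over itself, so $\Ext^j_{W_n}(-,W_n) = 0$ for every $j>0$. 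The hyperext spectral sequence therefore degenerates at $E_2$ and yields~\eqref{prop1AbsEq}.

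The main obstacle is the untwisted sheaf--level duality $W_n\Omega_X^i\cong R\sHom_{W_n\O_X}(W_n\Omega_X^{N-i},\WOn)$, and specifically the vanishing of the higher $\sExt^j$: since $W_n\Omega_X^{N-i}$ fails to be locally free over $W_n\O_X$ once $n>1$, this vanishing is not formal, and the induction demands careful bookkeeping of how the wedge pairing interacts with $R$, $V$ and the differential $d$ on the successive graded pieces of the $\Fil$--filtration.
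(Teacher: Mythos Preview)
Your proposal is correct in outline and agrees with the paper on~\eqref{prop1AbsEq}: both of you apply Grothendieck--Serre duality on the proper $W_n$--scheme with dualizing complex $\WOn[N]$ (the paper cites coherent duality together with \cite{Eke}*{Theorem~4.1}) and then use that $W_n$ is self--injective to kill higher $\Ext$.

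For~\eqref{prop1RelEq} your route differs in two places. First, your tensor manipulation is more direct than the paper's: you pull the invertible sheaf $\Fun^\vee$ through $R\sHom$ immediately, whereas the paper takes a finite locally free resolution of $W_n\Omega_X^{N-i}$ (invoking finite global dimension of $W_n\O_X$) to justify commuting $\tensor\,\Fun$ with $R\sHom$. Your argument is shorter and avoids that resolution altogether. Second, for the untwisted identity $W_n\Omega_X^i\cong R\sHom_{W_n\O_X}(W_n\Omega_X^{N-i},\WOn)$ the paper simply cites Ekedahl \cite{Eke}*{Thm~II:~2.2}, while you propose to reprove it by induction on~$n$ via the $\Fil$--filtration. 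That is a legitimate strategy---indeed it is essentially how Ekedahl establishes the result---but be aware that the induction is not a straightforward five--lemma: the short exact sequences of \cite{Illusie} change the base ring from $W_n\O_X$ to $W_{n-1}\O_X$ and to $\O_X$, and the transition maps $V$, $F$, $dV$ are not $W_n\O_X$--linear, so one must first identify $R\sHom_{W_n\O_X}(-,\WOn)$ applied to the graded pieces with the corresponding $R\sHom$ over the smaller ring, typically via an adjunction of the form $R\sHom_{W_n\O_X}(j_*M,\WOn)\cong j_*R\sHom_{W_{n-1}\O_X}(M,j^!\WOn)$. You flag this bookkeeping as the main obstacle, which is accurate; in practice it is cleaner to invoke \cite{Eke} directly, as the paper does.
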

    \begin{proof}
        We have
        \[
            \begin{aligned}
                \WOn\tensor\Fun 
                &\cong \sHom_{W_n\O_X}(W_n\O_X, \WOn\tensor\Fun) \\
                &\cong \sHom_{W_n\O_X}(\Fun^\vee, \WOn),
            \end{aligned}
        \]
        where the second isomorphism holds because $\Fun$ is locally free, and so $-\tensor\Fun^\vee$ is fully faithful.
        Since $R^i\sHom(\Fun^\vee, \WOn) = 0 \fa 0 < i$ (by local freeness), Equation~\ref{prop1RelEq} holds.
        To show Equation~\ref{prop1AbsEq}, take global sections of the derived push-forward.
        \[
            \begin{aligned}
                \Gamma_S(R\phi_*(\WOn\tensor_{W_n\O_X}\Fun))
                &\cong \Gamma_S(R\phi_*R\sHom_{W_n\O_X}(\Fun^{\vee}, \WOn)) \\
                &\cong \Gamma_S(R\sHom_{W_n\O_S\cong W_n}(R\phi_*(\Fun^{\vee}), W_n[-N])) \\
                &\cong \Hom_{W_n}((R\phi_*\Fun^{\vee})[N], W_n),
            \end{aligned}
        \]
        where $W_n$ is the constant sheaf, the second isomorphism is due to Coherent Duality and~\cite{Eke}*{I, Theorem 4.1}, and the third isomorphism is due to $W_n$ being an injective $W_n$--module.
        In particular for all $i$ there are isomorphisms
        \[
            H^i(X, \WOn\tensor_{W_n\O_X}\Fun) 
            \cong \Hom_{W_n}(H^{N-i}(X, \Fun^{\vee}), W_n).
        \]
    \end{proof}

    We now attempt passing to the limit. 
    First, recall the following result.

    \begin{lemma}[Chatzistamatiou, R\"ulling, cf. \cite{CR}*{Lemma 1.5.1}]\label{lemCR}
        Let $(X,\O_X)$ be a ringed space and $E=(E_n)$ a projective system of $\O_X$--modules (indexed by integers $1\leq n$). Let $\mathscr B$ be a basis of the topology of $X$. We consider the following two conditions:
        \begin{enumerate}
            \item For all $U \in \mathscr B$, $H^i(U,E_n) = 0 \fa i,1\leq n$.
            \item For all $U \in \mathscr B$, the projective system $(H^0(U,E_n))_{n\geq 1}$ satisfies the Mittag--Leffler condition.
        \end{enumerate}
        Then
        \begin{itemize}
            \item If $E$ satisfies condition (1), then $R^i\lim_n E_n = 0 \fa 2 \leq i$.
            \item If $E$ satisfies conditions (1) and (2), then $R^i\lim_n E_n = 0 \fa 1\leq i$, i.e. $E$ is $\lim$--acyclic.
        \end{itemize}
    \end{lemma}

    \begin{lemma}\label{lemNToInfty}
        For $\F$ an invertible sheaf of $\O_X$--modules,
        \begin{equation}\label{eqLim}
            \WO\tensor_{W\O_X}\Fu \cong \lim_n(\WOn \tensor_{W_n\O_X}\Fun) \cong R\lim_n(\WOn \tensor_{W_n\O_X}\Fun).
        \end{equation}
    \end{lemma}
    \begin{proof}
        \[
            \begin{aligned}
                \WO\tensor_{W\O_X}\Fu
                &\cong \lim_n \WOn \tensor_{W\O_X}\Fu \\
                &\cong \lim_n (\WOn \tensor_{W\O_X}\Fu) \\
                &\cong \lim_n (\WOn \tensor_{W_n\O_X}\Fun).
            \end{aligned}
        \]
        Take the exact sequence (cf. \cite{Illusie}) of $W_{n+1}\O_X$--modules
        \[
            0 \rightarrow {\mathrm gr}^n\WO \rightarrow \WOno \rightarrow \WOn \rightarrow 0,
        \]
        where ${\mathrm gr}^n\WO$ is coherent.
        Tensoring with $\Fu$ over $W\O_X$ yields an exact sequence
        \[
            \begin{aligned}
                0 &\rightarrow {\mathrm gr}^n\WO\tensor_{W_{n+1}\O_X/p}\Fu_{\leq n+1}/p \\
                &\rightarrow \WOno\tensor_{W_{n+1}\O_X}\Fu_{\leq n+1} 
                \rightarrow \WOn\tensor_{W_n\O_X}\Fun \rightarrow 0.
            \end{aligned}
        \]
        For any $x \in X$, let $U_x$ be an affine open neighborhood of $x$. 
        Then 
        \[
            H^1(U_x, {\mathrm gr}^n\WO\tensor_{W_{n+1}\O_X/p}\Fu_{\leq n+1}/p) = 0
        \]
        by coherence, and therefore 
        \renewcommand{\theenumi}{\roman{enumi}}
        \begin{enumerate}
            \item
                $H^i(U_x, \WOn\tensor_{W_n\O_X}\Fun) = 0 \fa i>0,$ again by coherence,
            \item
                $H^0(U_x, \WOno\tensor_{W_{n+1}\O_X}\Fu_{\leq n+1}) \rightarrow H^0(U_x, \WOn\tensor_{W_n\O_X}\Fun)$
        is surjective for all $n>0$.
        \end{enumerate}
        In this fashion a basis $\mathscr U$ for the topology of $X$ can be found, such that the above two properties hold for all $U \in \mathscr U$, and so by Lemma~\ref{lemCR}, Equation~\ref{eqLim} holds.
    \end{proof}

    The following `twisting' lemma is the reason for the asymmetry between the vanishing theorems.
    \begin{lemma}[Twisting]\label{lemTanakaTwisting}
        Let $D$ be a $\mathbb Q$--Cartier divisor on $X$ such that $p^tD$ is a $\mathbb Z$--divisor for some positive integer $t$. 
        Then
        \[
            \sHom_{W\O_{X,\mathbb Q}}(W\O_X(D)_\mathbb Q,\WO_\mathbb Q)
            \cong F_*\sHom_{W\O_{X,\mathbb Q}}(W\O_X(pD)_\mathbb Q, \WO_\mathbb Q).
        \]
        Furthermore, if $D$ is $\mathbb Z$--Cartier, then
        \[
            \WO\tensor_{W\O_X}W\O_X(D)
            \cong (F)_*(\WO\tensor_{W\O_X}W\O_X(pD)).
        \]
    \end{lemma}
    \begin{proof}
        The proof is due to Tanaka, see for example \cite{Tanaka}*{Theorem 4.2 (4) and Theorem 4.13}. 
        It is repeated here for the reader's convenience.

        We have an induced isomorphism 
        \[
            (X,W\O_{X,\mathbb Q}) \xrightarrow{\substack{F \\ \sim}} (X,W\O_{X,\mathbb Q}).
        \]
        In particular, $F^*\circ F_* = F_* \circ F^* = 1$. We obtain the following chain of isomorphisms:
        \[
            \begin{aligned}
                \sHom_{W\O_{X,\mathbb Q}}(W\O_{X}(D)_\mathbb Q, \WO_{,\mathbb Q})
                &\cong F_*F^* \sHom_{W\O_{X,\mathbb Q}}(W\O_{X}(D)_\mathbb Q, \WO_{,\mathbb Q}) \\
                &\cong F_* \sHom_{F^*W\O_{X,\mathbb Q}}(F^*W\O_{X}(D)_\mathbb Q, F^*\WO_{,\mathbb Q}) \\
                &\cong F_* \sHom_{W\O_{X,\mathbb Q}}(W\O_{X}(pD)_\mathbb Q, F^*F_*\WO_{,\mathbb Q}) \\
                &\cong F_* \sHom_{W\O_{X,\mathbb Q}}(W\O_{X}(pD)_\mathbb Q, \WO_{,\mathbb Q}).
            \end{aligned}
        \]
        This proves the first statement.
        
        For the second statement, recall that the Frobenius homomorphism
        \[
            \WO \xrightarrow{F} F_*\WO
        \]
        is an isomorphism of $W\O_X$--modules (cf.  \cite{Tanaka}*{Theorem 2.9}).
        Therefore
        \[
            \begin{aligned}
                \WO\tensor_{W\O_X}W\O_X(D)
                &\cong F_*(\WO)\tensor_{W\O_X}W\O_X(D) \\
                &\cong F_*(\WO\tensor_{W\O_X}F^*W\O_X(D)) \\
                &\cong F_*(\WO\tensor_{W\O_X}W\O_X(pD)).
            \end{aligned}
        \]
        where the second isomorphism is the projection formula.

    \end{proof}

    We can now observe a first, tenuous duality between Theorem~\ref{thmTanaka}~(i) and (ii).
    \begin{proposition}\label{propMiniDuality}
        Let $D$ be a $\mathbb Q$--Cartier divisor on $X$ such that $p^tD$ is a $\mathbb Z$--divisor for some positive integer $t$. 
        Then 
        \[
            R^i\lim_n R^j\phi_*\sHom_{W_n\O_X}(W_n\O_X(D), \WOn) = 0 \fa 0<i, j \in \mathbb N.
        \]
        Suppose there exists $t$ such that
        \[
            H^j(X, W_n\O_X(p^tD)) = 0 \fa 0<n, j<N. 
        \]
        Then 
        \[
            R^i\phi_*\sHom_{W\O_X}(W\O_X(D), \WO)_\mathbb Q = 0 \fa 0 < i.
        \]
        If further D is Cartier, then 
        \[
            H^i(X, \WO\tensor_{W\O_X}W\O_X(-D)) = 0 \fa 0 < i.
        \]
    \end{proposition}
    \begin{proof}
        Set $E_n := \sHom_{W_n\O_S}(R^j\phi_*W_n\O_X(D),W_n)$. Since $S=\Spec k$, we have $H^i(S,E_n) = 0 \fa 0 < i$.
        By Lemma~\ref{lemCR} then $R^i\lim_n E_n = 0 \fa 1<i$.
        $W_nX$ is a proper scheme, so by coherence $H^j(X,W_n\O_X(D))$ and $E_n$ are finite, hence Artinian $W_n$--modules.
        But a projective system of Artinian $W_n$--modules satisfies the Mittag--Leffler condition, and so the first statement holds by Lemma~\ref{lemCR}, Coherent Duality and \cite{Eke}*{I, Theorem 4.1}.
        By the Twisting Lemma we have
        \[
            \begin{aligned}
                &R\phi_*\sHom_{W\O_{X,\mathbb Q}}(W\O_X(D)_\mathbb Q,\WO_\mathbb Q) \\
                &\cong (F^t_S)_*R\phi_*\sHom_{W\O_X}(W\O_X(p^tD),\WO)_\mathbb Q \\
                &\cong (F^t_S)_*\left(R\lim_n\Hom_{W_n}(H^N(X,W_n\O_X(p^tD)),W_n)\right)_\mathbb Q \\
                &\cong (F^t_S)_*\left(\lim_n\Hom_{W_n}(H^N(X,W_n\O_X(p^tD)),W_n)\right)_\mathbb Q.
            \end{aligned}
        \]
        This proves the second statement.

        For the third statement write $\F := \O_X(-D)$ and consider the derived push--forward of $\WO\tensor\Fu$:
        \[
            \begin{aligned}
                R\phi_*(W\Omega_X^N\tensor_{W\O_X}\Fu) 
                &\cong (F_S^t)_*(R\phi_*(W\Omega_X^N\tensor_{W\O_X}\Fu^{p^t})) & \text{(by Lem.~\ref{lemTanakaTwisting})} \\
                &\cong (F_S^t)_*R\phi_*(\lim_n(\WOn)\tensor_{W\O_X}\Fu^{p^t}) \\
                &\cong (F_S^t)_*R\lim_nR\phi_*(\WOn\tensor_{W_n\O_X}\Fun^{p^t}) & \text{(by Lem.~\ref{lemNToInfty})} \\
                &\cong (F_S^t)_*R\lim_n R\phi_*(R\sHom_{W_n\O_X}(\Fun^{-p^t}, \WOn)) & \text{(by Prop.~\ref{prop1})} \\
                &\cong (F_S^t)_*R\lim_n \sHom_{W_n\O_S}(R^N\phi_*\Fun^{-p^t}, W_n),
            \end{aligned}
        \]
        for large enough $t$, where the last isomorphism is again due to Ekedal~\cite{Eke}*{Theorem 4.1}.
        The third statement then follows analogously to the proof of the second statement.
    \end{proof}

    \begin{remark}
        While not the same, the proof of Proposition~\ref{propMiniDuality} is quite similar in spirit to those of \cite{Tanaka}. 
        One might therefore view it as a mere reformulation of his theorems from a duality--oriented viewpoint.
    \end{remark}

    We now attempt to establish a more general duality in the spirit of Ekedahl \cite{Eke}. 
    A crucial ingredient to Ekedahl's result was the isomorphism in $D(W[d])$:
    \[
        R_n\tensor^L_R R\Gamma_S(W\Omega_X^\bullet) \cong R\Gamma_S(W_n\Omega_X^\bullet).
    \]
    We will employ a similar property to our case.

    Define $\omega$ to be the Cartier-Dieudonn\'e ring $W_\sigma[F,V]$, that is the (non-commutative) $W$-algebra generated by $V$ and $F$, subject to the relations
    \[
        aV = V\sigma(a),
        Fa = \sigma(a)F \fa a \in W;
        VF = FV = p,
    \]
    where $\sigma$ is the Frobenius map on $W$, induced from that on $k$.
    While as a set $\omega$ is equal to $(\dsum_i WV^i) \oplus (\dsum_j WF^j)$, it is a non-commutative ring with an evident left--$W$--module structure.
    It follows from the definition (and the fact that $k^p = k$) that every element of $\omega$ can be uniquely described by a sum 
    \[
        \sum_{0<i} a_{-i}V^i + \sum_{0 \leq j} b_jF^j, a_i, b_j \in W.
    \]
    Let 
    \[
        \omega_n := \omega / V^n\omega,
    \]
    which is a ($W,\omega$)--bimodule, since $V^n\omega$ is a sub--left--$W$--module of $\omega$ and a right-$\omega$--ideal generated by $V^n$. 
    We then have, as sets, 
    \[
        \omega_n = \dsum_{0<i<n}a_{-i}V^i \oplus \dsum_{0\leq j}b_jF^j, a_{-i} \in W_{n-i}, b_j \in W_n.
    \]
    This yields two sets of left--$\omega$--module homomorphisms: 
    an obvious restriction map $\omega_n \xrightarrow \pi \omega_{n-1}$, as well as an injective map $\omega_{n-1} \xrightarrow \varrho \omega_{n}$, both induced by the respective maps $R$ and $\varrho = \{$multiplication by $p$\} on $W_\bullet$. 

    \begin{lemma}\label{lemTruncMod}
        Let $A$ be a $k$-algebra.
        Then $W(A)$ has a natural structure of left--$\omega$--modules and there is an isomorphism of left--$W$--modules
        \[
            \omega_n \tensor^L_\omega W(A)
            \cong W_n(A).
        \]
        For a sheaf of left--$\omega$--modules $M$ on $X$, 
        \[
            \omega_n \tensor^L_\omega R\Gamma(M)
            \cong R\Gamma(M_n),
        \]
        where $M_n := M / V^n M \cong \omega_n \tensor_\omega M$.
    \end{lemma}
    \begin{proof}
        The left--$\omega$--module structure on $W(A)$ is given by
        \[
            \begin{tikzcd}[row sep=tiny]
                \omega \times W(A) \arrow{r}{\cdot} & W(A)\\
                (\Sigma_i a_iV^i + \Sigma_j b_jF^j, w) \arrow[r, mapsto] 
                    & \Sigma_i a_iV^i(w) + \Sigma_j b_jF^j(w). \\
            \end{tikzcd}
        \]
        To compute the derived tensor product
        \[
            D(\omega-\mathfrak{lmod}) \xrightarrow{\omega_n\tensor^L_\omega \cdot} D(\mathfrak {ab}),
        \]take a projective resolution $P^\bullet$ of $\omega_n$:
        \[
            0 \rightarrow \omega \xrightarrow {V^n\cdot} \omega \rightarrow \omega_n \rightarrow 0.
        \]
        This complex of right--$\omega$--modules, when tensored with $W(A)$, yields a complex $P^\bullet\otimes_\omega W(A)$:
        \[
            0 \rightarrow W(A) \xrightarrow{V^n} W(A) \rightarrow 0.
        \]
        To see that this represents $W_n(A)$ simply observe that the map induced by $\omega \xrightarrow{V^n\cdot} \omega$ via the tensor product is precisely the $n$-fold Verschiebungs-map on $W(A)$:
        \[
            \begin{tikzcd}[row sep=tiny]
                W(A) \arrow{r}{\sim}
                & \omega \tensor_\omega W(A) \arrow{r}{V} 
                & \omega \tensor_\omega W(A) \arrow{r}{\sim} 
                & W(A) \\
                a \arrow[r, mapsto]
                & 1 \tensor a \arrow[r, mapsto] 
                & V \tensor a \arrow[r, mapsto]
                & V \cdot a = V(a).
            \end{tikzcd}
        \]
        Analogously, the action $F\cdot$ on $W(A)$ induced via the tensor product is the familiar Frobenius map $F$.

        Moreover, since $\omega_n$ is a left--$W$--module, so is $\omega_n\otimes_\omega^L W(A)$.
        Lastly, to see that the $D(\mathfrak{ab})$--isomorphism is in fact in $D(W-\mathfrak{lmod})$, simply observe that the left--$W$--module structures on both sides coincide via the isomorphism.

        For the second statement, let $M \in (X, \omega)$, that is $M$ is a sheaf of left--$\omega$--modules on $X$. 
        Let $P^\bullet$ be the projetive resolution of $\omega_n$
        \[
            0 \rightarrow \omega \xrightarrow {V^n \cdot} \omega  \rightarrow \omega_n \rightarrow 0.
        \]
        Then, since $P^i$ is flat for all $i$, 
        \[
            \omega_n\otimes^L_\omega R\Gamma(M) 
			\cong P^\bullet \tensor_\omega R\Gamma(M)
			\cong R\Gamma(P^\bullet\tensor_\omega M)
            \cong R\Gamma(\omega_n\otimes^L_\omega M)
            \cong R\Gamma(M_n).
        \]
    \end{proof}

    \begin{lemma}\label{lemOmegaModule}
        The ring $\omega$ has a natural $\mathbb Z$--grading given by $F$ and $V$:
        \[
            \omega = \left( \dsum_{0<i} WV^{i} \right) \oplus \left( \dsum_{0\leq j} WF^{j} \right).
        \]
        Let $D$ be a $\mathbb Q$--Cartier divisor on $X$, and write $\omega(D) := \dsum_{t\in \mathbb Z} W\O_X(p^tD)$. Then $\omega(D)$ is a sheaf of graded left--$\omega$--modules, and 
        \[
            \omega_n \tensor^L_\omega \omega(D) \cong \omega_n(D) := \dsum_{t\in\mathbb Z} W_n\O_X(p^tD).
        \]
        By Lemma~\ref{lemTruncMod} then
        \[
            \omega_n \tensor_\omega^L R\Gamma_X(\omega(D))
            \cong R\Gamma_X(\omega_n(D)).
        \]
    \end{lemma}
    \begin{proof}
        We have the following maps $F$ and $V$:
        \[
            \begin{aligned}
                W\O_X(D) \xrightarrow F F_* W\O_X(pD) \\
                F_*W\O_X(pD) \xrightarrow V W\O_X(D).
            \end{aligned}
        \]        
        That is by definition, 
        since 
        \[
            V^n(W\O_X(p^tD)) \subset W\O_X(p^{t-n}) \text{ and }
            F^n(W\O_X(p^tD)) \subset W\O_X(p^{t+n}D),
        \]
        $\omega(D)$ is in fact a sheaf of $\mathbb Z$--graded left--$\omega$--modules. The last statement follows from Lemma~\ref{lemTruncMod} and the fact that $W_n\O_X(D) \cong W\O_X(D) / V^n((F^n)_*W\O_X(p^nD))$.
    \end{proof}

    \begin{proposition}\label{propOmegaCheck}
        As left--$W$--modules, $\omega \cong ( \dsum_i W ) \oplus ( \dsum_j W )$. 
        Similarly, as left--$W_n$--modules, $\omega_n \cong ( \dsum_{i<n}W_{n-i} ) \oplus (\dsum_j W_n)$.
        It follows that 
        \[
            \Hom_{W_n}(\omega_n, W_n) \cong \left( \dsum_{0<i<n}W_{n-i} \right) \oplus \left( \prod_{0\leq j} W_n \right)
        \]
        as $W_n$--modules.
        With the left--$\omega$--module structure induced by the right--structure on $\omega_n$, there is an isomorphism
        \[
            \Hom_{W_n}(\omega_n, W_n) 
            \cong 
            \dsum_{0<i<n} F^{i}  W_{n-i}
            \oplus \prod_{0\leq j} F^{-j}W_n
        \]
        of left--$\omega$--modules.
    \end{proposition}
    \begin{proof}
        Since $k=k^p$, elements $a \in \omega$ can be uniquely written as 
        \[
            a = \sum_i a_iV^i + \sum_j b_jF^j, 
            a_i, b_j \in W.
        \]
        The natural identification is clearly additive and bijective:
        \[
            \begin{tikzcd}
                & W \arrow{dl} \arrow{dr} \\
                \omega \arrow{rr}{\sim} && (\dsum_i W)\oplus( \dsum_j W) \\
                \sum_i a_iV^i + \sum_jb_jF^j \arrow[rr, mapsto] && \sum_i a_i + \sum_jb_j.
            \end{tikzcd}
        \]
        It is $W$--linear (on the left), since the left--$W$--module structure of $\omega$ is simply multiplication on the left.
        Analogously, $\omega_n \cong ( \dsum_{i<n}W_{n-i}) \oplus (\dsum_j W_n)$ as left--$W$--modules.
        Therefore in $(W-\mathfrak{mod})$,
        \begin{equation}
            \begin{aligned}
                \Hom_{W_n}(\omega_n, W_n) 
                &\cong \left(\dsum_{0<i<n} \Hom_{W_n}(W_{n-i}, W_n)\right) \\
                &\oplus \left( \prod_{0\leq j} \Hom_{W_n}(W_n, W_n) \right) \\
                &\cong \left( \dsum_{0<i<n} W_{n-i} \right) \oplus \left( \prod_{0\leq j} W_n \right) =: \check\omega_n.
            \end{aligned}
        \end{equation}
        
        The right--$\omega$--module structure on $\omega_n$ induces a structure of (graded) left--$\omega$--modules on $\Hom_{W_n}(\omega_n,W_n)$. 
        For any $\alpha \in \check\omega_n$, 
        \[
            \begin{aligned}
                V\cdot (\omega_n \xrightarrow {\alpha} W_n ) 
                    &= \omega_n \xrightarrow {\alpha \circ (\cdot V)} W_n, \\
                F\cdot (\omega_n \xrightarrow {\alpha} W_n ) 
                    &= \omega_n \xrightarrow {\alpha \circ (\cdot F)} W_n.
            \end{aligned}
        \]
        Let
        \[
            \begin{aligned}
                \alpha &= \dsum_{i} \alpha_i \in \dsum_{0\leq i<n} \Hom_{W_n}(W_{n-i}V^i,W_n) \cong \dsum_{0\leq i<n} W_{n-i}, \\
                \beta &= \prod_j \beta_j \in \prod_{0\leq j} \Hom_{W_n}(W_nF^j,W_n) \cong \prod_{0\leq j}W_n.
            \end{aligned}
        \]
        Under the above isomorphisms, $\alpha_i \in W_{n-i}$ corresponds to the map in $\Hom(W_{n-i},W_n)$ which takes $1$ to that element in $W_n$ which corresponds to $\alpha_i$ under the isomorphism $W_{n-i} \xrightarrow {p^i} \text{im}(p^i) \subset W_n$. 
        That is, it takes $1$ to $p^i\alpha_i \in W_n$. 
        Similarly, $\beta_j \in W_n$ corresponds to the map in $\Hom(W_n,W_n)$ which takes $1$ to $\beta_j$.

        Let $\alpha_k = 0 \fa k \neq i, \beta_l = 0 \fa l \neq j$. 
        That is $\alpha$ is zero outside of $W_{n-i}V^i \subset \omega_n$, and $\beta$ is zero outside of $W_nF^j \subset \omega_n$.
        Then 
        \[
            \begin{aligned}
                V\cdot\alpha &= p^{-i+1}((V\cdot\alpha_i)(1)) = p^{-i+1}((\alpha_{i}\circ(\cdot V))(1)) \\
                    &= p^{-i+1}p^{i}\alpha_{i} = p\alpha_{i} \in W_{n-i+1} \fa 0 \leq i, \\
                V\cdot\beta &= (V\cdot\beta_j)(1) = (\beta_{j}\circ (\cdot V))(1) \\
                    &= \beta_{j}(p) = p\beta_j \in W_n \fa 0 < j; \\
                F\cdot\alpha &= p^{-i-1}((F\cdot\alpha_i)(1)) = p^{-i-1}((\alpha_{i}\circ(\cdot F))(1)) \\
                    &= p^{-i-1}(\alpha_{i}(p)) = p^{-i-1}(pp^i\alpha_{i}) = R(\alpha_i) \in W_{n-i-1} \fa 0 \leq i \\
                F\cdot\beta &= (F\cdot\beta_j)(1) = (\beta_j \circ(\cdot F))(1) \\
                    &= \beta_j(1) = \beta_j \in W_n \fa 0 < j,
            \end{aligned}
        \]
        where $R$ is the natural restriction map $W_k \xrightarrow R W_{k-1}$.
        There is thus an isomorphism of left--$\omega$--modules
        \[
            \begin{aligned}
                \check\omega_n 
                &\cong 
                \dsum_{0<i<n} F^{i}  W_{n-i}
                \oplus \prod_{0\leq j} F^{-j}W_n \\
                &\cong
                \dsum_{0<i<n} F^iW_{n-i}
                \oplus \prod_{0\leq j} V^jp^{-j}W_n,
            \end{aligned}
        \]
        where the graded left--$\omega$--module structure on the latter is the natural one, that is multiplication on the left.

    \end{proof}

    Note that the injective left--$W$--linear maps $\omega_{n-1} \xrightarrow \varrho \omega_n$ form a direct system. 
    $\Hom_{W_n}(\omega_n, W_n[-N])$ then form an inverse system (cf. \cite{Eke}*{III.2.3.*}) with boundary maps $\pi$ defined by the commutativity of the diagram
    \begin{equation}\label{diagIndPi}
        \begin{tikzcd}
            \sHom_{W_n}(\omega_n, W_n[-N]) \arrow{r}{\varrho^*} \arrow{d}{\pi} & \sHom_{W_n}(j_{n,*}\omega_{n-1}, W_n[-N]) \\
            j_{n,*}\sHom_{W_{n-1}}(\omega_{n-1}, W_{n-1}[-N]) \arrow{ur}{\varrho_*}.
        \end{tikzcd}
    \end{equation}
    Here $W_{n-1}S \xrightarrow {j_n} W_nS$ is the natural immersion.
    There exist unique such maps $\pi$ because, by coherent duality and the fact that $W_{n-1} \cong j_n^!W_n$, $\varrho_*$ is an isomorphism. 

    We can now prove the main duality theorem.
    \begin{theorem}\label{thmDuality}
        Let $X$ be a smooth projective variety over a perfect field $k$ of characteristic $p>0$, and $D$ be a $\mathbb Q$--Cartier divisor on $X$.
        Write 
        \[
            \check\omega := 
            \prod_{j\in\mathbb Z} F^{j}W
        \]
        Then
        \[
            \begin{aligned}
                &\prod_{t\in\mathbb Z} R\phi_*R\lim_nR\sHom_{W_n\O_X}(W_n\O_X(p^tD),\WOn) \\
                &\cong R\Hom_\omega\left( R\phi_* \omega(D), \check\omega[-N]\right).
            \end{aligned}
        \]
    \end{theorem}
    \begin{proof}
        \[
            \begin{aligned}
                &\prod_{t\in\mathbb Z} R\phi_* R\lim_nR\sHom_{W_n\O_X}(W_n\O_X(p^tD),\WOn) \\
                &\cong R\lim_n\prod_{t\in\mathbb Z} R\phi_* R\sHom_{W_n\O_X}(W_n\O_X(p^tD),\WOn) & \text{(e.g. \cite{stacks}*{0BKP})}\\
                &\cong R\lim_n R\sHom_{W_n\O_S}\left(\dsum_{t\in\mathbb Z} R\phi_*W_n\O_X(p^tD), W_n[-N]\right) & \text{(by \cite{Eke}*{Thm. 4.1})} \\
                &\cong \lim_n R\Hom_{W_n}\left(\omega_n \tensor^L_\omega R\Gamma_X(\omega(D)), W_n[-N]\right). & \text{(by Lem.~\ref{lemOmegaModule} and Prop.~\ref{propMiniDuality})}
            \end{aligned}
        \]

        By derived tensor--hom adjunction 
        (see for instance~\cite{Yek}*{Proposition 14.3.18}) 
        we have an isomorphism 
        \begin{equation}\label{eqCohDuality}
            \begin{aligned}
                &R\lim_n R\Hom_{W_n}\left(\omega_n \tensor^L_\omega R\Gamma_X(\omega(D)), W_n[-N]\right) \\
                &\cong R\lim_n R\Hom_{\omega}\left(R\Gamma_X(\omega(D)), R\Hom_{W_n}(\omega_n, W_n[-N])\right)
            \end{aligned}
        \end{equation}
        in $D(W-\mathfrak{bimod})$.

        Let $0 \neq w \in \Hom(W_{n-i}, W_n) \cong W_{n-i}$. 
        Since $\pi$ is induced by the commutative Diagram~\ref{diagIndPi}, $\pi(w)$ is the unique map such that the following diagram commutes:

        \[
            \begin{tikzcd}
                W_{n-i} \arrow{r}{w} & W_n \\
                W_{n-i-1} \arrow{u}{\varrho} \arrow{r}{\pi(w)} & W_{n-1} \arrow{u}{\varrho}
            \end{tikzcd}.
        \]
		This unique map is $R(w)$ (since for $\tau\in W_{n-i-1}, \varrho(R(w)(\tau)) = w\varrho(\tau) \in W_n$).		
        The induced maps $W_n \xrightarrow \pi W_{n-1}$ are therefore precisely the term-wise restriction maps $R$.
        Taking the limit yields an isomorphism of right--$W_n$--modules
        \[
            \begin{aligned}
                \lim_n\Hom_{W_n}(\omega_n, W_n) 
                &\cong \lim_n \left\{ \left( \dsum_{0<i<n} W_{n-i} \right) \oplus \left( \prod_{0\leq j} W_n \right) \right\} \\
                &\cong\prod_{j\in\mathbb Z}W,
            \end{aligned}
        \]
        whose $\omega$--module structure is given by that on the $\check\omega_n$ (cf. Proposition~\ref{propOmegaCheck}).
        That is as sets 
        \[
            \lim_n\Hom_{W_n}(\omega_n,W_n)
            \cong 
            \prod_{j\in\mathbb Z} F^{-j}W
            =: \check\omega.
        \]
        with the obvious structure of graded left--$\omega$--modules.
        The result then follows from Equation \ref{eqCohDuality}.
    \end{proof}

\subsection{Computation and application to vanishing}
    In this section we consider divisors $D$ such that 
    \[
        \begin{aligned}
            R^j\Gamma_X(W\O_X(p^tD)) &= 0 \fa j<N, \text{ large enough $t$}, \\
            R^N\Gamma_X(W\O_X(p^tD)) &= \text{torsion--free for large enough $t$}.
        \end{aligned}
    \]
    In particular this is the case for $D$ such that $-D$ is ample (cf.~\cite{Tanaka}*{Theorem 4.14 and 5.3, Step 5}).
    For such $D$ the following finiteness lemma holds.
    
    \begin{lemma}\label{lemFiniteness}
        \[
            R^j\Gamma_X(W\O_X(p^tD)) \cong R^j\Gamma_X(W_n\O_X(p^tD))
        \]
        for any $j<N, n_0 \leq n$, where $n_0$ depends on $t$, for any $t$.
    \end{lemma}
    \begin{proof}
        By assumption, for large enough $n$ the short exact sequence of $W\O_X$--modules 
        \[
            0 \rightarrow F^n_*W\O_X(p^{t+n}D) \xrightarrow {V^n} W\O_X(p^tD)
            \xrightarrow R W_n\O_X(p^tD) \rightarrow 0
        \]
        induces exact sequences of $W$--modules
        \[
            0 \rightarrow 0 
            \xrightarrow {V^n} R^j\Gamma_X(W\O_X(p^tD))
            \xrightarrow R R^j\Gamma_X(W_n\O_X(p^tD)) \rightarrow 0
        \]
        for all $j < N$.
    \end{proof}
    
    The following corollary shows that the dual's cohomology's vanishing depends on the $V$--torsion of the cohomology.

    \begin{corollary}\label{corTanVan}
        Under the above assumptions, 
        \[
            \begin{aligned}
                &R^i\phi_* \sHom_{W\O_X}(W\O_X(p^tD), \WO)) \\
                \cong~&R^{i-2}\Gamma_X\left(R^1\lim_n \sHom_{W\O_X}(W\O_X(p^tD), \WOn)\right)
            \end{aligned}
        \]
        $\fa 0<i, t \in \mathbb Z.$
        
        If $p^tD$ is $\mathbb Z$--Cartier or $i = 1$, this is equal to zero.
        Otherwise it is torsion.
    \end{corollary}
    \begin{proof}
        First we will show that 
        \begin{equation}\label{eqCorStep1}
            \begin{aligned}
                &\prod_{t\in\mathbb Z}R^i\phi_*R\lim_nR\sHom_{W_n\O_X}(W_n\O_X(p^tD),\WOn) \\
                &\cong h^i\left(\lim_n R\Hom_{W_n}\left(\omega_n \tensor^L_\omega R\Gamma_X(\omega(D)), W_n[-N]\right)\right) = 0 \fa 0 < i.
            \end{aligned}
        \end{equation}

        There is a spectral sequence
        \[
            E_{n,2}^{p,q} := \omega_n \otimes^{L^p}_\omega R^q\Gamma_X(\omega(D))
            \Rightarrow \omega_n \otimes^{L^{p+q}}_\omega R\Gamma_X(\omega(D))
            =: E_n^{p+q}.
        \]
        The sequence degenerates at the second sheet, and so we have an exact sequence 
        \[
            0 \rightarrow E_{n,2}^{0,j} \rightarrow E_{n}^{j} \rightarrow E_{n,2}^{-1,j+1}
            \rightarrow 0.
        \]
        We now consider the two outer terms $E_{n,2}^{0,j} \cong \omega_n\tensor R^t\Gamma(\omega(D))$  and $E_{n,2}^{-1,j+1} \cong R^{j+1}\Gamma(\omega(D))[V^n]$ (where $M[V^n]$ denotes the $V^n$--torsion of a left--$\omega$--module $M$) separately. 

        By tensor--hom adjunction 
        we have an isomorphism 
        \[
            \begin{aligned}
                &\lim_n \Hom_{W_n}\left(\omega_n \tensor_\omega R^{-i}\Gamma_X(\omega(D)), W_n[-N]\right) \\
                &\cong \lim_n \Hom_{\omega}\left(R^{-i}\Gamma_X(\omega(D)), \Hom_{W_n}(\omega_n, W_n[-N])\right) \\
                &\cong \lim_n \Hom_{\omega}\left(R^{N-i}\Gamma_X(\omega(D)), \check\omega_n)\right) \\
                &\cong \Hom_\omega\left( R^{N-i}\Gamma_X(\omega(D)), \check\omega)\right)
                = 0 \fa 0<i
            \end{aligned}
        \]
        in $D(W-\mathfrak{bimod})$, where the final equality follows from the assumption on torsion.

        Consider now the right outer term.
        \[
            \begin{aligned}
                &\lim_n \Hom_{W_n}\left(
                \text{ker}\left(R^{N-i+1}\Gamma_X(\omega(D)) \xrightarrow {V^n} R^{N-i+1}\Gamma_X(\omega(D)\right), 
                W_n\right) \\
                & \cong \prod_t \lim_n \Hom_{W_n}\left(
                \text{ker}\left(R^{N-i+1}\Gamma_X(W\O_X(p^{t+n}D)) \xrightarrow {V^n} R^{N-i+1}\Gamma_X(W\O_X(p^tD)\right), 
                W_n\right).
            \end{aligned}
        \]
        $R^j\Gamma_X(W\O_X(p^tD))$ is torsion--free for large enough $t$ and any $j$.
        So only finitely many objects of each inverse system are non--zero, wherefore the inverse limit is also zero.

        Since $\lim\Hom(E_{n,2}^{0,j},W_n) = 0 = \lim\Hom(E_{n,2}^{-1,j+1},W_n) \fa j<N$, 
        \[
            \lim_n\Hom_{W_n}(E_n^j,W_n) = 0 \fa j<N.
        \]
        We have now shown that Equation~\ref{eqCorStep1} holds. 
        Unfortunately, we do not know whether 
        \[
            R^1\lim_n \sHom_{W_n\O_X}(W_n\O_X(D),\WOn) = 0 ,
        \]
        and thus whether
        \[
            \sHom_{W\O_X}(W\O_X(D),\WO) \cong R\lim_nR\sHom_{W_n\O_X}(W_n\O_X(D),\WOn)
        \]
        holds for $D$ not $\mathbb Z$--Cartier.
        In order to describe the left hand side using Theorem~\ref{thmDuality}, we therefore need to consider another spectral sequence.
        Write $\mathcal H_{n,t} := \sHom_{W\O_X}(W\O_X(p^tD),\WOn)$. 
        There is a spectral sequence
        \begin{equation}\label{eqSS}
            E_{t,2}^{p,q} := R^p\phi_*R^q\lim_n \mathcal H_{n,t} \Rightarrow R^{p+q}\phi_*R\lim_n \mathcal H_{n,t} =: E_t^{p+q}.
        \end{equation}
        Since $R^i\lim \mathcal H_{n,t} = 0 \fa 1<i$ (condition (1) of Lemma~\ref{lemCR} is satisfied), page two of the spectral sequence contains only two nonzero rows, $q=1$ and $q=0$. 
        Consequently it degenerates at page three, and 
        \[
            \text{Fil}^nE_t^n \cong E_{t,3}^{n,0} \cong E_{t,2}^{n,0}/d(E_{t,2}^{n-2,1}).
        \]
        We obtain a long exact sequence
        \begin{equation}\label{eqLES}
            \cdots \rightarrow E_{t,2}^{n-2,1} 
            \rightarrow E_{t,2}^{n,0} 
            \rightarrow E_t^n 
            \rightarrow E_{t,2}^{n-1,1}
            \rightarrow \cdots.
        \end{equation}
        We have $E_t^n = 0$ for $0<n$. 
        Therefore $E_{t,2}^{1,0} \cong E_{t,2}^{-1,1} = 0$, and $E_{t,2}^{i,0} \cong E_{t,2}^{i-2,1}$ for $1<i$.
        Due to the Twisting Lemma~\ref{lemTanakaTwisting}, 
        \[
            (E_{t,2}^{i,0})_\mathbb Q = (E_{t+s,2}^{i,0})_\mathbb Q = 0,
        \]
        for large enough $s \in \mathbb N$.
    \end{proof}

\begin{bibdiv}
\begin{biblist}
\bib{CR}{article}{
	author = {Chatzistamatiou, Andre},
	author = {Rülling, Kay},
	year = {2011},
	pages = {},
	title = {Hodge-Witt cohomology and Witt-rational singularities},
	volume = {17},
	journal = {Documenta Mathematica}
}

\bib{Eke}{article}{
	author={Ekedahl, Torsten},
	title={On the multiplicative properties of the de Rham---Witt complex. I},
	journal={Arkiv f{\"o}r Matematik},
	year={1984},
	volume={22},
	number={2},
	pages={185--239}
}

\bib{Illusie}{article}{
	author = {Illusie, Luc},
	journal = {Annales scientifiques de l'École Normale Supérieure},
	language = {fr},
	number = {4},
	pages = {501-661},
	publisher = {Elsevier},
	title = {Complexe de de Rham-Witt et cohomologie cristalline},
	url = {http://eudml.org/doc/82043},
	volume = {12},
	year = {1979}
}

\bib{stacks}{book}{
	author       = {The {Stacks project authors}},
	title        = {The Stacks project},
	url 		 = {https://stacks.math.columbia.edu},
	year         = {2019},
	label		 = {Stacks}
}

\bib{Tanaka}{article}{
	author = {Tanaka, Hiromu},
	title = {Vanishing theorems of Kodaira type for Witt Canonical sheaves},
	journal = {arXiv:1707.04036v3 [math.AG]},
	year = {2020}
}

\bib{Yek}{book}{
    place={Cambridge}, 
    series={Cambridge Studies in Advanced Mathematics}, 
    title={Derived Categories}, 
    DOI={10.1017/9781108292825}, 
    publisher={Cambridge University Press}, 
    author={Yekutieli, Amnon}, 
    year={2019}
}
\end{biblist}
\end{bibdiv}

\end{document}